\newtheorem{theorem}{Theorem}
\begin{document}

\title{Explicit formulas for Euler's totient function and the number of divisors}

\author[$\dagger$]{Jean-Christophe {\sc Pain}$^{1,2,}$\footnote{jean-christophe.pain@cea.fr}\\
\small
$^1$CEA, DAM, DIF, F-91297 Arpajon, France\\
$^2$Universit\'e Paris-Saclay, CEA, Laboratoire Mati\`ere en Conditions Extr\^emes,\\ 
F-91680 Bruy\`eres-le-Ch\^atel, France
}

\date{}

\maketitle

\begin{abstract}
In this article, we present relations for the Euler totient function $\varphi(n)$ and the number of divisors $\tau(n)$ in terms of finite sums of integer parts of rational numbers or greatest common divisors of pairs of integers. Some of the formulas are obtained using a relation due to Menon and the connections with the Pillai arithmetic function are outlined. The reported expressions may be useful to derive new bounds for the usual arithmetical functions.
\end{abstract}

\section{Introduction}\label{sec1}

The number of divisors of natural number $n$ is
\begin{equation*}
    \tau(n)=\sum_{d|n}1,
\end{equation*}
and the Euler totient function is defined as
\begin{equation*}
    \varphi(n)=\sum_{\substack{k=1\\\mathrm{gcd}(k,n)=1}}^{n-1}1,
\end{equation*}
where $\mathrm{gcd}(k,n)$ denotes the greatest common divisor of $k$ and $n$. It is well known that
\begin{equation*}
    \varphi (n)=n\prod_{p\mid n}\left(1-{\frac {1}{p}}\right),
\end{equation*}
where $p$ are distinct prime numbers. The M\"obius inversion applied to the divisor sum formula gives (see for instance \cite{Apostol1976,Graham1989}):

\begin{equation*}
    \varphi (n)=\sum_{d\mid n}\mu \left(d\right)\, {\frac {n}{d}}=n\sum _{d\mid n}{\frac {\mu (d)}{d}},
\end{equation*}
where
\begin{equation*}
    \mu (n)=\begin{cases}
1&{\text{if }}n=1,\\
(-1)^{k}&{\text{if }}n{\text{ is the product of }}k{\text{ distinct primes}},\\
0&{\text{if }}n{\text{ is divisible by a square}}>1,
\end{cases}
\end{equation*}
is the M\"obius function. The Euler totient function satisfies
\begin{equation*}
    \varphi (mn)=\varphi (m)\,\varphi (n)\,{\frac {d}{\varphi (d)}}\quad {\text{where }}d=\operatorname {gcd} (m,n).
\end{equation*}
and it is worth mentioning the less known relation \cite{Dineva}:
\begin{equation*}
    \sum_{d\mid n}{\frac {\mu ^{2}(d)}{\varphi (d)}}={\frac {n}{\varphi (n)}}.
\end{equation*}

In section \ref{sec2}, the greatest common divisor is expressed in terms of integer parts. Based on this result, two expressions involving the Euler totient function $\varphi(n)$ are derived In section \ref{sec3}:
\begin{equation*}
    \varphi(n)=\frac{4}{n^2-3n+2}\,\sum_{j=1}^{n-1}\sum_{\substack{k=1\\\mathrm{gcd}(k,n)=1}}^n\left\lfloor\frac{jk}{n}\right\rfloor,
\end{equation*}
and
\begin{equation*}
    \varphi(n)=-\frac{n(n-1)}{2}+2\,\sum_{j=1}^{n-1}\sum_{k=1}^n\left\lfloor\frac{jk}{n}\right\rfloor\,\cos\left(\frac{2k\pi}{n}\right).
\end{equation*}
In section \ref{sec4}, a third identity, involving Euler's totient function as well as the number $\tau(n)$ of divisors of an integer $n$ (sometimes denoted by $d(n)$), is obtained using the Menon relation \cite{Menon1965,Toth2011,Zhao2017,Toth2018,Toth2019}. It reads
\begin{equation*}
    \varphi(n)=\frac{4}{2\,\tau(n)+n^2-5n+2}\,\sum_{j=1}^{n-1}\sum_{\substack{k=1\\\mathrm{gcd}(k,n)=1}}^n\left\lfloor\frac{j(k-1)}{n}\right\rfloor.
\end{equation*}
Finally, expressions of $\tau(n)$, are given in section \ref{sec5}, combining the above mentioned results, the Menon identity and the Pillai arithmetic function. They read respectively
\begin{equation*}
    \tau(n)=\frac{\displaystyle\sum_{\substack{k=1\\\mathrm{gcd}(k,n)=1}}^n\,\mathrm{gcd}(k-1,n)}{\sum_{k=1}^n\,\mathrm{gcd}(k,n)\,\cos\left(\frac{2\pi k}{n}\right)},
\end{equation*}
\begin{equation*}
    \tau(n)=\frac{(n^2-3n+2)\,\displaystyle\sum_{\substack{k=1\\\mathrm{gcd}(k,n)=1}}^n\,\mathrm{gcd}(k-1,n)}{4\,\displaystyle\sum_{j=1}^{n-1}\displaystyle\sum_{\substack{k=1\\\mathrm{gcd}(k,n)=1}}^n\left\lfloor\frac{jk}{n}\right\rfloor},
\end{equation*}
\begin{equation*}
    \tau(n)=\frac{\displaystyle\sum_{\substack{k=1\\\mathrm{gcd}(k,n)=1}}^n\,\mathrm{gcd}(k-1,n)}{2\left\{\displaystyle\sum_{j=1}^{n-1}\displaystyle\sum_{k=1}^n\left\lfloor\frac{jk}{n}\right\rfloor\,\cos\left(\frac{2k\pi}{n}\right)\right\}-\displaystyle\frac{n(n-1)}{2}},
\end{equation*}
and
\begin{equation*}
    \tau(n)=\frac{n^2-5n+2}{\left\{4\,\sum_{j=1}^{n-1}\displaystyle\sum_{\substack{k=1\\\mathrm{gcd}(k,n)=1}}^n\left\lfloor\frac{j(k-1)}{n}\right\rfloor-2\,\displaystyle\sum_{\substack{k=1\\\mathrm{gcd}(k,n)=1}}^n\,\mathrm{gcd}(k-1,n)\right\}}\sum_{\substack{k=1\\\mathrm{gcd}(k,n)=1}}^n\,\mathrm{gcd}(k-1,n).
\end{equation*}

\section{Expression of the greatest common divisor using integer parts}\label{sec2}

In this section we recall a known expression of the greatest common divisors of two natural numbers, that we will use in the following. Let us start with the following theorem \cite{Andreescu2009}:

\begin{theorem}

Let $a$, $b$, be nonnegative real numbers and $f: [a,b]\rightarrow [c,d]$ a bijective increasing function. One has
\begin{equation*}
    \sum_{a\leq k\leq b}\lfloor f(k)\rfloor+\sum_{c\leq k\leq d}\lfloor f^{-1}(k)\rfloor-n(\mathscr{G}_f)=\lfloor b\rfloor\,\lfloor d\rfloor-g(a)g(c),
\end{equation*}
where $k$ is integer, $n(\mathscr{G}_f)$ is the number of points with nonnegative integer coordinates on the graph of $f$ ang the function $g: \mathbb{R}\rightarrow\mathbb{Z}$ is defined by

\begin{equation*}
    g(x)=\begin{cases}
    \lfloor x\rfloor & \mathrm{ if } \;\;\;\; x\in\mathbb{R}\setminus\mathbb{Z}\\
    0 & \mathrm{ if } \;\;\;\; x=0\\
    x-1 & \mathrm{ if } \;\;\;\; x\in\mathbb{Z}\setminus \left\{0\right\}.
\end{cases}
\end{equation*}

\end{theorem}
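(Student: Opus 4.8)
The plan is to prove this as a lattice-point counting identity, essentially a generalized reciprocity statement for floor functions, by interpreting each term geometrically. The quantity $\sum_{a\le k\le b}\lfloor f(k)\rfloor$ counts lattice points with positive integer ordinate lying weakly below the graph of $f$ over integer abscissae in $[a,b]$; symmetrically, $\sum_{c\le k\le d}\lfloor f^{-1}(k)\rfloor$ counts lattice points weakly to the left of the graph, reading the same curve from the $y$-axis. First I would fix the rectangle $R=[0,\lfloor b\rfloor]\times[0,\lfloor d\rfloor]$ and set up a bijective/double-counting correspondence: every lattice point of $R$ with strictly positive coordinates is either strictly below the graph of $f$, strictly above it, or exactly on it. The increasing bijection $f$ guarantees that ``below the graph at column $k$'' and ``left of the graph at row $\ell$'' partition the off-graph interior points without overlap, so the two floor-sums together count each interior off-graph lattice point exactly once, and the on-graph points are counted with the appropriate multiplicity recorded by $n(\mathscr{G}_f)$.

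The key steps in order are as follows. First, I would express the total number of positive-coordinate lattice points in $R$ as $\lfloor b\rfloor\lfloor d\rfloor$ (the product on the right-hand side, before the boundary correction), since $f([a,b])=[c,d]$ forces $\lfloor f(\lfloor b\rfloor)\rfloor$-type alignments at the corners. Second, I would classify these points by their position relative to the graph: a point $(k,\ell)$ with $1\le k\le\lfloor b\rfloor$ lies strictly below iff $\ell\le\lfloor f(k)\rfloor$ when $f(k)\notin\mathbb{Z}$, and the column-count is exactly $g$ applied at the relevant endpoint when $f(k)$ is itself an integer — this is precisely why the piecewise function $g$ appears, to subtract the boundary contributions $g(a)g(c)$ coming from the lower-left corner where the sums would otherwise double-count or mis-count the axes. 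Third, I would add the below-graph count and the left-of-graph count, observe that their sum overcounts exactly the on-graph points, and subtract $n(\mathscr{G}_f)$ to correct for it, yielding the stated identity.

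The main obstacle I expect is the careful bookkeeping of the boundary and the integer-valued points of $f$, which is exactly what the ad hoc function $g$ is engineered to handle. The three cases in the definition of $g$ — namely $\lfloor x\rfloor$ for non-integers, $0$ at the origin, and $x-1$ for nonzero integers — correspond to whether the lower-left corner of the rectangle sits off the axes, at the origin, or on a gridline where a point must be excluded to avoid double-counting the axis it lies on. Getting the sums' index ranges to match the lattice-point ranges (whether endpoints are included, and whether $f(a)=c$ is a grid point) is delicate, and the cleanest route is to first establish the identity when $a,b,c,d$ are all non-integers, where $g=\lfloor\cdot\rfloor$ throughout and no degeneracies occur, and then verify by a limiting or direct case-check that the $g$-corrections restore equality when any endpoint becomes integral. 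I would treat the all-non-integer case as the conceptual heart and relegate the integer-endpoint adjustments to a verification that the correction term $\lfloor b\rfloor\lfloor d\rfloor-g(a)g(c)$ absorbs each degenerate contribution correctly.
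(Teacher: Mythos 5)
Your proposal is correct and follows essentially the same route as the paper: both count lattice points in the rectangle $[0,b]\times[0,d]$ in two ways (by columns under the graph and by rows to its left), subtract $n(\mathscr{G}_f)$ for the points on the graph counted twice, and use $g(a)g(c)$ to correct for the lower-left corner region. The only difference is expository — you propose to treat the non-integer-endpoint case first and then check the degenerate cases, whereas the paper directly writes the identity $n(\mathscr{R}_1)+n(\mathscr{R}_2)-n(\mathscr{G}_f)=n(\mathscr{R}_3)-n(\mathscr{R}_4)$ for its four explicit regions — but the underlying argument is the same.
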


\begin{proof}

For a bounded region $\mathscr{R}$ of the plane, we write $n(\mathscr{R})$ the number $n(\mathscr{R})$ of points in $\mathscr{R}$ with nonnegative integral coordinates. Let us consider the ensembles

\begin{align*}
\mathscr{R}_1=&\left\{(x,y)\in\mathbb{R}^2|\,a\leq x\leq b, 0\leq y\leq f(x)\right\},\\
\mathscr{R}_2=&\left\{(x,y)\in\mathbb{R}^2|\,c\leq y\leq d, 0\leq x\leq f^{-1}(y)\right\},\\
\mathscr{R}_3=&\left\{(x,y)\in\mathbb{R}^2|\,0\leq x\leq b, 0\leq y\leq d\right\},\\
\mathscr{R}_4=&\left\{(x,y)\in\mathbb{R}^2|\,0\leq x\leq a, 0\leq y\leq c\right\}.
\end{align*}
Then $n(\mathscr{R}_1)=\sum_{a\leq k\leq b}[f(k)]$, $n(\mathscr{R}_2)=\sum_{c\leq k\leq d}[f^{-1}(k)]$, $n(\mathscr{R}_2)=[b][d]$ and $n(\mathscr{R}_4)=g(a)\,g(c)$. We have
\begin{equation*}
    n(\mathscr{R}_1)+n(\mathscr{R}_2)-n(\mathscr{G}_f)=n(\mathscr{R}_3)-n(\mathscr{R}_4),
\end{equation*}
which completes the proof.

\end{proof}

\begin{theorem}
    
Let us denote $\mathrm{gcd}(m,n)$ the greatest common divisor of natural numbers $m$ and $n$. It can be shown that \cite{Andreescu2009,Landau1966}:
\begin{equation}\label{gcd1}
    \sum_{k=1}^{n}\left\lfloor\frac{km}{n}\right\rfloor+\sum_{k=1}^{m}\left\lfloor\frac{kn}{m}\right\rfloor=mn+\mathrm{gcd}(m,n).
\end{equation}
\end{theorem}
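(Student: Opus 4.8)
The plan is to specialize the lattice-counting identity of the preceding theorem to the straight line $f(x)=\frac{m}{n}\,x$. Since \eqref{gcd1} is symmetric under the exchange of $m$ and $n$, I may assume without loss of generality that $m\le n$. I would take the domain $[a,b]=[1,n]$, on which $f$ is an increasing bijection onto $[c,d]$ with $c=f(1)=\frac{m}{n}$ and $d=f(n)=m$, and with inverse $f^{-1}(y)=\frac{n}{m}\,y$. The deliberate choice $a=1$ rather than $a=0$ is the device that makes the count come out exactly right, since it discards the lattice point sitting at the origin.

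Next I would read off the four ingredients appearing in the theorem. The first sum is $\sum_{1\le k\le n}\lfloor \frac{mk}{n}\rfloor=\sum_{k=1}^{n}\lfloor \frac{km}{n}\rfloor$. For the second sum, the assumption $m\le n$ gives $c=\frac{m}{n}\le 1$, so the integers $k$ with $c\le k\le d$ are precisely $1,2,\dots,m$ and $\sum_{c\le k\le d}\lfloor f^{-1}(k)\rfloor=\sum_{k=1}^{m}\lfloor \frac{kn}{m}\rfloor$. Because $m,n\in\mathbb{Z}$ one has $\lfloor b\rfloor\,\lfloor d\rfloor=mn$, and since $1\in\mathbb{Z}\setminus\{0\}$ the definition of $g$ gives $g(1)=0$, so the correction $g(a)\,g(c)=g(1)\,g(\frac{m}{n})$ vanishes. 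The heart of the argument is the evaluation of $n(\mathscr{G}_f)$, the number of points with nonnegative integer coordinates on the graph of $f$ over $[1,n]$: such a point $(x,f(x))$ qualifies exactly when $x\in\{1,\dots,n\}$ and $\frac{mx}{n}\in\mathbb{Z}$. Writing $\mathrm{gcd}(m,n)=g$, $m=g\,m'$, $n=g\,n'$ with $\mathrm{gcd}(m',n')=1$, the condition $\frac{mx}{n}=\frac{m'x}{n'}\in\mathbb{Z}$ is equivalent to $n'\mid x$; the multiples of $n'$ in $\{1,\dots,n\}$ are $n',2n',\dots,g\,n'=n$, which are exactly $g$ values, whence $n(\mathscr{G}_f)=\mathrm{gcd}(m,n)$.

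Substituting these four evaluations into the identity $n(\mathscr{R}_1)+n(\mathscr{R}_2)-n(\mathscr{G}_f)=\lfloor b\rfloor\lfloor d\rfloor-g(a)g(c)$ of the preceding theorem yields
\[
\sum_{k=1}^{n}\left\lfloor\frac{km}{n}\right\rfloor+\sum_{k=1}^{m}\left\lfloor\frac{kn}{m}\right\rfloor-\mathrm{gcd}(m,n)=mn,
\]
which rearranges into \eqref{gcd1}. I expect the main obstacle to be precisely the boundary bookkeeping: one must make sure the origin is not counted (handled by setting $a=1$) and that the lower endpoint $c=\frac{m}{n}$ neither includes nor omits a stray integer value of $k$ in the second sum, which is exactly why the reduction to $m\le n$ is convenient. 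The near-degenerate case $m=n$ also deserves a separate glance, since then $c=1$ is itself an integer; however $g(1)=0$ still holds and every $x\in\{1,\dots,n\}$ lies on the graph, so $n(\mathscr{G}_f)=n=\mathrm{gcd}(n,n)$ and the formula persists.
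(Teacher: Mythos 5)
Your proposal is correct and follows essentially the same route as the paper: apply the lattice-point identity of the preceding theorem to $f(x)=mx/n$ on $[1,n]$ and show that the graph carries exactly $\mathrm{gcd}(m,n)$ integer points. In fact you carry out the boundary bookkeeping (the reduction to $m\le n$, the evaluation $g(1)=0$, and the identification of the integer range $1,\dots,m$ for the second sum) more explicitly than the paper, which states only the lattice-point count and leaves the rest implicit.
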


\begin{proof}

The list 
\begin{equation*}
    \frac{1.m}{n}, \frac{2.m_1}{n_1}, \cdots, \frac{n.m}{n}
\end{equation*}
contains exactly $\mathrm{gcd}(m,n)$ integers. Indeed, let $d$ be the greatest common divisor of $m$ and $n$, with $m=m_1.d$ and $n=n_1.d$. The list can be written
\begin{equation*}
    \frac{1.m_1}{n_1}, \frac{2.m_1}{n_1}, \cdots, \frac{n.m_1}{n_1}
\end{equation*}
and since $m_1$ and $n_1$ are relatively prime, there are $\lfloor\frac{n}{n_1}\rfloor$ integers among them.

Let us now consider the function $f: [1,n]\rightarrow\left[\frac{m}{n},m\right]$, such that $f(x)=m\,x/n$. We have $n(\mathscr{G}_f)=\mathrm{gcd}(m,n)$, which completes the proof. 

\end{proof}

Using the same kind of procedure, but considering the bijective decreasing function $f: [1,n]\rightarrow\left[0,m-\frac{m}{n}\right]$, such that $f(x)=-m\,x/n+m$, with $m\leq n$, we obtain a new relation which, combined to Eq. (\ref{gcd1}) yields (changing the letters for convenience):
\begin{equation}\label{gcd}
    \mathrm{gcd}(k,n)=2\sum_{j=1}^{n-1}\left\lfloor\frac{jk}{n}\right\rfloor+k+n-kn.
\end{equation}

\section{Two expressions of the Euler totient function}\label{sec3}

Summing both sides of Eq. (\ref{gcd}) over $k$, with the constraint that $k$ and $n$ are coprime (i.e., $\mathrm{gcd}(k,n)=1$) yields
\begin{equation*}
    \varphi(n)=2\sum_{\substack{k=1\\\mathrm{gcd}(k,n)=1}}^n\sum_{j=1}^{n-1}\left\lfloor\frac{jk}{n}\right\rfloor+(1-n)\sum_{\mathrm{gcd}(k,n)=1}^nk+n\,\varphi(n).
\end{equation*}
We have also
\begin{equation*}
    \sum_{\mathrm{gcd}(k,n)=1}^nk=\frac{1}{2}n\,\varphi(n)
\end{equation*}
and thus
\begin{equation}\label{res1}
    \varphi(n)=\frac{4}{n^2-3n+2}\,\sum_{j=1}^{n-1}\sum_{\substack{k=1\\\mathrm{gcd}(k,n)=1}}^n\left\lfloor\frac{jk}{n}\right\rfloor,
\end{equation}
which is the first main result of the present work.

We have also, since the totient is the discrete Fourier transform of the gcd, evaluated at 1 \cite{Schramm2008}:
\begin{equation}\label{fourier}
    \varphi(n)=\sum_{k=1}^n\,\mathrm{gcd}(k,n)\,\cos\left(\frac{2\pi k}{n}\right),
\end{equation}
and thus, multiplying Eq. (\ref{gcd}) by $\cos(2k\pi/n)$ and summing over $k$, one obtains
\begin{align*}
    \sum_{k=1}^n\,\mathrm{gcd}(k,n)\,\cos\left(\frac{2\pi k}{n}\right)=&2\sum_{k=1}^n\left\lfloor\frac{jk}{n}\right\rfloor\,\cos\left(\frac{2k\pi}{n}\right)\\
    &-(n-1)\,\sum_{k=1}^nk\,\cos\left(\frac{2k\pi}{n}\right)+n\,\sum_{k=1}^nk\cos\left(\frac{2k\pi}{n}\right).
\end{align*}
Now, since
\begin{equation*}
    \sum_{k=1}^nk\,\cos\left(\frac{2k\pi}{n}\right)=0
\end{equation*}
as well as
\begin{equation*}
    \sum_{k=1}^nk\,\cos\left(\frac{2k\pi}{n}\right)=\frac{n}{2},
\end{equation*}
one gets
\begin{equation}\label{res2}
    \varphi(n)=-\frac{n(n-1)}{2}+2\,\sum_{j=1}^{n-1}\sum_{k=1}^n\left\lfloor\frac{jk}{n}\right\rfloor\,\cos\left(\frac{2k\pi}{n}\right),
\end{equation}
which is the second main result of the present work.

\section{Third relation: expression of the Euler totient function from the Menon relation}\label{sec4}

The Menon relation (named after Puliyakot Kesava Menon, 1917-1979) reads \cite{Menon1965}
\begin{equation}\label{menon}
    \varphi(n)\,\tau(n)=\sum_{\substack{k=1\\\mathrm{gcd}(k,n)=1}}^n\,\mathrm{gcd}(k-1,n).
\end{equation}
If $\mathrm{gcd}(k,n)=1$, we have, according to Eq. (\ref{gcd}):

\begin{equation*}
    \mathrm{gcd}(k,n)=1\;\;\;\;\Rightarrow \;\;\;\; 2\sum_{j=1}^{n-1}\left\lfloor\frac{jk}{n}\right\rfloor+k+n-kn=1,
\end{equation*}
i.e.,
\begin{equation*}
    k+n-kn=1-2\sum_{j=1}^{n-1}\left\lfloor\frac{jk}{n}\right\rfloor.
\end{equation*}
We have also, still if $\mathrm{gcd}(k,n)=1$:
\begin{equation*}
\sum_{j=1}^{n-1}\left\lfloor\frac{jk}{n}\right\rfloor=\frac{(k-1)(n-1)}{2}
\end{equation*}
and using 

\begin{equation*}
    \mathrm{gcd}(k-1,n)=2\sum_{j=1}^{n-1}\left\lfloor\frac{j(k-1)}{n}{n}\right\rfloor+k-1+n-(k-1)n, 
\end{equation*}
the Menon relation yields
\begin{equation*}
    \varphi(n)\,\tau(n)=2\sum_{j=1}^{n-1}\sum_{k=1\\\substack{\mathrm{gcd}(k,n)=1}}^n\left\lfloor\frac{j(k-1)}{n}\right\rfloor-\frac{n(n-1)}{2}\,\varphi(n)+n\,\varphi(n)
\end{equation*}
and thus finally
\begin{equation}\label{res3}
    \varphi(n)=\frac{4}{2\,\tau(n)+n^2-5n+2}\,\sum_{j=1}^{n-1}\sum_{\substack{k=1\\\mathrm{gcd}(k,n)=1}}^n\left\lfloor\frac{j(k-1)}{n}\right\rfloor.
\end{equation}

\section{Relations for the number of divisors using the Pillai function}\label{sec5}

The Pillai arithmetical function, also referred to as the $\mathrm{gcd}$-sum function, is defined as \cite{Pillai1933,Broughan2002,Toth2010}:
\begin{equation*}
P(n)=\sum _{k=1}^{n}\mathrm{gcd}(k,n),
\end{equation*}
or equivalently
\begin{equation*}
P(n)=\sum _{d\mid n}d\,\varphi (n/d),
\end{equation*}
where $d$ is a divisor of $n$ and $\varphi$ is Euler's totient function. It also can be written as
\begin{equation*}
    P(n)=\sum _{d\mid n}d\,\tau (d)\,\mu (n/d),
\end{equation*}
where $\tau$ is the divisor function, and $\mu$ the M\"obius function (see section \ref{sec1}). Since one has
\begin{equation*}
    \varphi\left(\frac{n}{d}\right)=\frac{n}{d}\,\prod_{p|d}\left(1-\frac{1}{p}\right)
\end{equation*}
we can write
\begin{equation*}
    P(n)=n\,\prod_{p|n}\left[1+\nu_p(n)\,\left(1-\frac{1}{p}\right)\right],
\end{equation*}
where $\nu_p$ is the $p-$adic valuation of $n$. We have also
\begin{equation*}
    P(n)=\sum_{k=1}^n\sum_{d|gcd(k,n)}\varphi(d)=\sum_{k=1}^n\sum_{d|k}\sum_{d|n}\varphi(d)=\sum_{d|n}\varphi(d)\sum_{k=1}^n\sum_{d|k}1=n\sum_{d|n}\frac{\varphi(d)}{d}.
\end{equation*}

Combining the Menon identity (\ref{menon}) with Eq. (\ref{fourier}), one obtains
\begin{equation}\label{toto}
    \tau(n)=\frac{\displaystyle\sum_{\substack{k=1\\\mathrm{gcd}(k,n)=1}}^n\,\mathrm{gcd}(k-1,n)}{\displaystyle\sum_{k=1}^n\,\mathrm{gcd}(k,n)\,\cos\left(\frac{2\pi k}{n}\right)}.
\end{equation}
Using Eq. (\ref{res1}), one gets
\begin{equation*}
    \tau(n)=\frac{(n^2-3n+2)\,\displaystyle\sum_{\substack{k=1\\\mathrm{gcd}(k,n)=1}}^n\,\mathrm{gcd}(k-1,n)}{4\,\displaystyle\sum_{j=1}^{n-1}\displaystyle\sum_{\substack{k=1\\\mathrm{gcd}(k,n)=1}}^n\left\lfloor\frac{jk}{n}\right\rfloor}.
\end{equation*}
Similarly, Eq. (\ref{res2}) yields
\begin{equation*}
    \tau(n)=\frac{\displaystyle\sum_{\substack{k=1\\\mathrm{gcd}(k,n)=1}}^n\,\mathrm{gcd}(k-1,n)}{2\left\{\displaystyle\sum_{j=1}^{n-1}\displaystyle\sum_{k=1}^n\left\lfloor\frac{jk}{n}\right\rfloor\,\cos\left(\frac{2k\pi}{n}\right)\right\}-\displaystyle\frac{n(n-1)}{2}}.
\end{equation*}
Finally, Eq. (\ref{res3}) leads to
\begin{equation*}
    \tau(n)=\frac{2\,\tau(n)+n^2-5n+2}{4\,\sum_{j=1}^{n-1}\sum_{\substack{k=1\\\mathrm{gcd}(k,n)=1}}^n\left\lfloor\frac{j(k-1)}{n}\right\rfloor}\sum_{\substack{k=1\\\mathrm{gcd}(k,n)=1}}^n\,\mathrm{gcd}(k-1,n)
\end{equation*}
which gives, using Eq. (\ref{toto}):
\begin{equation*}
    \tau(n)=\frac{n^2-5n+2}{\left\{4\,\sum_{j=1}^{n-1}\sum_{\substack{k=1\\\mathrm{gcd}(k,n)=1}}^n\left\lfloor\frac{j(k-1)}{n}\right\rfloor-2\,\sum_{\substack{k=1\\\mathrm{gcd}(k,n)=1}}^n\,\mathrm{gcd}(k-1,n)\right\}}\sum_{\substack{k=1\\\mathrm{gcd}(k,n)=1}}^n\,\mathrm{gcd}(k-1,n).
\end{equation*}
Interesting formulas can also be obtained for the M\"obius function mentioned above, for instance using \cite{Hardy1980}:
\begin{equation*}
    \mu (n)=\sum _{\stackrel {1\leq k\leq n}{\gcd(k,\,n)=1}}e^{2\pi i{\frac {k}{n}}},
\end{equation*}
or
\begin{equation*}
    \sum _{k\leq n}\left\lfloor\frac {n}{k}\right\rfloor \mu (k)=1
\end{equation*}
as well as the more recent relation\cite{Kline2020}:
\begin{equation*}
    \sum _{jk\leq n}\sin\left(\frac{\pi jk}{2}\right)\mu (k)=1.
\end{equation*}

\section{Conclusion}\label{sec6}

We have discussed, using a relation due to Menon and with the help of the Pillai arithmetic function, expressions of the Euler totient function and the number of divisors $\tau(n)$ in terms of finite sums of integer parts of rational numbers or greatest common divisors. The formulas presented here may be of interest for the determination of bounds of the usual arithmetic functions \cite{Sandor2008,Sandor2015,Atanassov2001,Dimitrov2024}. We plan to try to derive new relations from extensions of the Pillai arithmetic function \cite{Sivaramakrishnan1971,Haukkanen2008,Sandor2001}, and investigate the case of Jordan's totient function ($k$ is a positive integer) \cite{Andrica2004}:
\begin{equation*}
    J_{k}(n)=n^{k}\prod_{p|n}\left(1-\frac {1}{p^{k}}\right),
\end{equation*}
where $p$ ranges through the prime divisors of $n$, and of the Mertens function:
\begin{equation*}
    M(n)=\sum _{k=1}^{n}\mu (k).
\end{equation*}
We also would like to take advantage of the link between the $\mathrm{gcd}$ and $q-$binomial coefficients \cite{Slavin2008}.

\end{document}